\renewcommand{\P}{\mathbf P}           
\newcommand{\poset}{{\mathcal P}}      
\newcommand{\x}{\mathbf{x}}            
\newcommand{\M}{{\mathcal M}}
\newcommand{\F}{\mathcal{F}}           
\newcommand{\R}{\mathcal{R}}           
\newcommand{\G}{\mathcal{G}}         
\newcommand{\Grass}{\operatorname{G}}   
\newcommand{\init}{\operatorname{in}}   
\newtheorem{theorem}{Theorem}
\newtheorem{lemma}[theorem]{Lemma}
\begin{document}

\begin{center}
\Large \bf A local version of Gotzmann's Persistence.
\end{center}

\begin{center}
Morgan Sherman\\
California State University, Channel Islands
\end{center}

\begin{center}
\begin{minipage}{4in}
\centerline{\bf Abstract}

\small
Gotzmann's Persistence states that the growth of an arbitrary ideal can be controlled by comparing it to the growth of the lexicographic ideal.  This is used, for instance, in finding equations which cut out the Hilbert scheme (of subschemes of $\mathbf{P}^n$ with fixed Hilbert polynomial) sitting inside an appropriate Grassmannian.  We introduce the notion of an {\it extremal ideal} which extends the notion of the lex ideal to other term orders.  We then state and prove a version of Gotzmann's theorem for these ideals, valid in an open subset of a Grassmannian.
\end{minipage}
\end{center}

\section{Introduction}

The Hilbert scheme was first constructed by Grothendieck \cite{grothendieck} around 1960 and remains a fundamental construction in algebraic geometry.  In general the Hilbert scheme can be rather complicated \cite{mumfordFPAG, harris/morrison, vakil} but we know at least that it is connected, a fact proven by Hartshorne in his thesis \cite{hartshorne}.  Later Reeves \cite{reeves} improved on this and bounded the {\it radius} of the Hilbert scheme, thus limiting the number of steps between any two components.  More recently still Peeva and Stillman \cite{peeva/stillman} concretely constructed a path from an arbitrary point to one fixed point.

All of these results use (explicitly or implicitly) the notion of a Borel-fixed ideal and a lexicographic ideal.  These notions have become central in the study of the Hilbert scheme.  The importance of the role of the lex ideal has a lot to do with {\it Gotzmann's Persistence} (see section \ref{section 3}) which allows one to determine what ideals have a given Hilbert polynomial by way of comparing with them with the lex ideal.  In this paper we extend the notion of the lex ideal to an arbitrary monomial ordering.  We then show that with this extended notion Gotzmann's Persistence is still valid if we restrict ourselves to a convenient open set.  One potentially useful application is to compute equations for this open subset in the Hilbert scheme in a smaller Grassmannian than normally needed to embed the entire Hilbert scheme.

This paper is organized as follows.  In section \ref{section 2} we give a quick over view of Borel-fixed ideals, focusing on what is needed for this paper -- a concrete description of their syzygies.  Then in section \ref{section 3} we introduce the notion of an {\it extremal ideal}.  We then state and prove the main result of this paper, theorem \ref{thm: local Gotzmann}.  Afterwards we illustrate an application by explicitly computing equations defining a portion of the Hilbert scheme of 3 points in the plane.


\section{Borel-fixed ideals and their syzygies}\label{section 2}

Let $I$ be an ideal in a homogeneous polynomial ring $S = K[x_0, x_1, \ldots, x_n]$ over a ground field $K$ which we assume to be both algebraically closed and of characteristic 0.  Recall that $I$ is said to be {\it lexicographic} if in each degree $d$ the vector space $I_d$ has a basis consisting of the first $\dim I_d$ monomials in the lexicographic monomial order.  Thus if $S = K[x,y,z]$ and if we agree that $x > y > z$ then the ideals
\[
    I_1 = (x, y^2) \mathrm{\ and\ } I_2 = (x^2, xy, xz^2)
\]
are lexicographic, but $J = (x^2, xz, y^3)$ is not as it is missing the monomial $xy$ in degree 2.  For a given Hilbert polynomial and choice of order among the variables there is exactly one saturated lexicographic (or simply {\it lex}) ideal defining a scheme having such Hilbert polynomial.  The lex ideal has many useful properties among which is the fact that it is Borel-fixed:

{\bf Definition.}  An ideal $I \subseteq S$ is {\it Borel-fixed} if it is a monomial ideal and satisfies
$x_i \x^A \in I \implies x_{i-1} \x^A \in I$ (for $i>0$).

It can be helpful to think of this {\it Borel-criterion} combinatorially in the following way:  construct the poset $\poset = \poset(n,d)$ on degree $d$ monomials in $S=K[x_0\ldots x_n]$ generated from the covering relation $x_i \x^A \succ x_{i-1} \x^A$;  Then an ideal is Borel-fixed if in each degree its monomials constitute a {\it filter} of this poset (that is a set closed under moving up).  Dually the standard monomials constitute an {\it order-ideal}.  In this interpretation every monomial ordering satisfying $x_0 > x_1 > \cdots > x_n$ is a linear refinement of this {\it Borel partial ordering}.  Hence we see that in particular the lex ideal is Borel-fixed.  This poset was constructed in a previous paper by the author \cite{sherman2}.

When we speak of a {\it Borel-generator} of a Borel-fixed ideal we mean a minimal generator of the ideal which is also a minimal element in the corresponding filter in $\poset$.  A Borel-fixed ideal is the smallest Borel-fixed ideal containing all of its Borel-generators.  Dually we speak of the {\it Borel-maximal} standard monomials in each degree.  As an example the ideal $I = (x^2, xy, xz, y^3)$ is Borel-fixed and has Borel-generators $xz$ and $y^3$.  One benefit to dealing with Borel-generators is that the number of them is unchanged by passing to a truncation $I_{\geq d}$.  If the Borel-generator $\x^A$ has degree less than $d$ then it is replaced by $\x^A x_n^{d-|A|}$ in this truncation.

Borel-fixed ideals are very useful in studying the Hilbert scheme.  This is due to both the their susceptibility to study via combinatorial methods, as well as to Galligo's theorem \cite{galligo} which states that {\it in generic coordinates the initial ideal of any ideal is Borel-fixed}.

We can read off easily the algebraic invariants of a Borel-fixed ideal.  For example its regularity (in the sense of Castelnuovo and Mumford) is the largest degree amongst its minimal generators.  More broadly Eliahou and Kervaire calculated the minimal free resolution of any {\it stable ideal}, which is any monomial ideal $I$ satisfying the more flexible criterion:  if $k \geq \mathrm{max}(\x^A)$ and $x_k\x^A \in I$ then $x_i\x^A \in I$ for every $i < k$.  All we will need for this paper is the following decomposition of a stable ideal:

\begin{lemma}\label{lmm: stable ideals}
Let $I$ be a stable ideal and
$\G(I)$ its set of minimal monomial generators.
Then $\M(I),$ the set
of monomials of $I,$ can be partitioned as follows:
\[
   \M(I) = \coprod_{\x^A \in \mathcal{G}(I)}
   \{ \x^A\x^M \mid \max(A) \leq \min(M) \}
\]
\end{lemma}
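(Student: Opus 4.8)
The plan is to establish the following sharper statement: every monomial $\x^B\in\M(I)$ can be written in exactly one way as $\x^B=\x^A\x^M$ with $\x^A\in\G(I)$ and $\max(A)\le\min(M)$. Granting this, the lemma follows at once: each set $\{\,\x^A\x^M\mid\max(A)\le\min(M)\,\}$ on the right lies inside $\M(I)$ since $\x^A\in I$; the \emph{existence} half of the sharper statement says these sets cover $\M(I)$; and the \emph{uniqueness} half says they are pairwise disjoint, because a monomial belonging to two of them would admit two distinct factorizations of the required shape (distinct indeed, since $\x^A$ determines both $|A|$ and $\x^M=\x^B/\x^A$).

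For existence, I would work with the ``prefix'' monomials of $\x^B$. Write $\x^B=x_{i_1}x_{i_2}\cdots x_{i_d}$ with $i_1\le i_2\le\cdots\le i_d$, the variables of $\x^B$ listed with multiplicity, so $d=|B|$, and for $0\le s\le d$ set $m_s=x_{i_1}\cdots x_{i_s}$, the product of the $s$ smallest variables of $\x^B$. Since $m_d=\x^B\in I$, the integer $t=\min\{\,s:m_s\in I\,\}$ is well defined, and I claim $m_t\in\G(I)$. If not, some proper divisor of $m_t$ lies in $I$, and hence $m_t/x_{i_s}\in I$ for some $s\le t$. If $i_s=i_t$ (in particular if $s=t$) this just says $m_{t-1}\in I$, contradicting minimality of $t$; and if $i_s<i_t$ then the stability hypothesis, applied with $k=i_t$ to the monomial $x_{i_t}\cdot\bigl(m_t/(x_{i_s}x_{i_t})\bigr)=m_t/x_{i_s}\in I$ (note $i_t$ is indeed at least the largest index occurring in $m_t/(x_{i_s}x_{i_t})$), yields $x_{i_s}\cdot\bigl(m_t/(x_{i_s}x_{i_t})\bigr)=m_{t-1}\in I$, the same contradiction. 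So $\x^A:=m_t\in\G(I)$, and taking $\x^M:=\x^B/m_t=x_{i_{t+1}}\cdots x_{i_d}$ gives a factorization with $\max(A)=i_t\le i_{t+1}=\min(M)$ (vacuously if $t=d$).

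For uniqueness, let $\x^B=\x^A\x^M$ be \emph{any} factorization of the required form and put $t'=|A|$. Since $\x^A$ divides $\x^B$ and every variable occurring in $\x^A$ has index at most $\min(M)$, the monomial $\x^A$ must be precisely the product of the $t'$ smallest of the $d$ variables of $\x^B$ (counted with multiplicity), i.e.\ $\x^A=m_{t'}$. Moreover $t'$ is forced: if $m_s$ and $m_{s'}$ with $s<s'$ both belonged to $\G(I)$, then $m_s\mid m_{s'}$ would contradict minimality of a generator, so $t'$ can only be the value $t$ found above. Hence $\x^A$, and therefore $\x^M=\x^B/\x^A$, are uniquely determined.

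The one step I expect to require real care is the claim that the shortest prefix $m_t$ lying in $I$ is actually a \emph{minimal} generator of $I$; this is the sole place the stability hypothesis is used, and the point is that a proper divisor of $m_t$ inside $I$ always lets one ``slide'' the largest variable $x_{i_t}$ down onto a smaller variable already present in $m_t$, producing the strictly shorter prefix $m_{t-1}$ in $I$ and contradicting the choice of $t$. The rest of the argument is just elementary bookkeeping about which variables of $\x^B$ end up in $\x^A$ versus $\x^M$.
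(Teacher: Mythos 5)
Your proof is correct and takes essentially the same approach as the paper: your shortest prefix $m_t$ lying in $I$ is exactly the paper's monomial $\x^{B_i}$ (obtained by successively stripping off the largest variable), and stability is invoked in the same way to slide the top variable down and contradict minimality of $t$. The only difference is in how disjointness is organized—you show any admissible factorization must use a prefix and that at most one prefix is a minimal generator, while the paper compares exponents of two competing factorizations directly—but both are routine bookkeeping.
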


\begin{proof}
First we show the union captures all the monomials.
Let $\x^B \in I$ have degree $d > 0.$  For $i = 0, \ldots, d,$
define $\x^{B_i}$ by
\[
   \x^{B_0} = \x^B, \quad
   \x^{B_{i+1}} = \frac{\x^{B_i}}{x_{\max(B_i)}}
\]
Since $\x^{B_0} = \x^B \in I,$ we can
choose $i \in \{0,\ldots,d\}$
maximal so that $\x^{B_i} \in I.$  We claim
that $\x^{B_i}$ is a minimal monomial generator.
If it is, and $k_j = \max(B_j)$ for each $j,$ then
$\x^B = \x^{B_i}x_{k_{i-1}}\ldots x_{k_{0}}$ lies
in the partition associated to $\x^{B_i}$
(note that $k_i \leq k_{i-1} \leq \cdots \leq k_0$).
If $\x^{B_i}$ is not a minimal monomial generator then
there is a variable
$x_j$ such that
$\x^{B_i}/x_j \in I.$  Certainly $j \leq k_i = \max(B_i).$
But then
\[
   \frac{x_j}{x_{k_i}} \frac{\x^{B_i}}{x_j}
   = \frac{\x^{B_i}}{x_{k_i}} \in I
\]
since $I$ is stable.  This contradicts
the choice of $i.$

Now we show the sets are disjoint.  Let $\x^A$ and $\x^B$
be minimal monomial generators, with
$k = \max(A) \leq l = \max(B).$  Suppose there are
monomials $\x^M$ and $\x^N$ such that
$\x^A\x^M = \x^B\x^N$ and $\min(M)\geq k, \min(N) \geq l.$
If $k = l$ then $\x^A$
and $\x^B$ agree in each variable up to $x_{k-1},$
so one of $\x^A$ or $\x^B$ is a multiple
of the other by some power of $x_k.$  Hence we may assume
$k < l$ in which case we find
\[
   \x^A x_k^{m_k}\cdots x_l^{m_l} = \x^B x_l^{n_l}
\]
where $m_i = \deg_i(M),$ and $n_i = \deg_i(N).$  By comparing
the exponents of the $x_l$ variable on both sides we find
$m_l \geq n_l,$ so
\[
   \x^A x_k^{m_k}\cdots x_l^{m_l - n_l} = \x^B ,
\]
a contradiction since $\x^A$ and $\x^B$ are minimal
generators.
\end{proof}

In practice it will suffice for us to truncate our Borel-fixed ideals in their degree of regularity (which recall is simply the degree of the largest minimal generator).  One very nice benefit of doing so ensures the ideal has a linear minimal free resolution, that is if $m$ is the degree of regularity then all syzygies in the $i$th syzygy module have degree $m+i.$  See, for instance, \cite[section 2.2]{bayer}.  This fact, together with Lemma \ref{lmm: stable ideals} gives us the following description of the first syzygies.

\begin{lemma}\label{lmm: first syzygies}
Let $I$ be a Borel-fixed ideal all of whose minimal generators have the same degree.  Then a basis for the first syzygies of $I$ corresponds with all relations of the form
\[
    x_i \x^A - x_k \left( \frac{x_i}{x_k} \x^A \right) = 0, \quad i < k = \max(A), \ \x^A \in I.
\]
\end{lemma}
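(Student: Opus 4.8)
The plan is to use the linear minimal free resolution guaranteed for a Borel-fixed ideal generated in a single degree, count the rank of the first syzygy module via the Eliahou--Kervaire formula (which follows from the decomposition in Lemma \ref{lmm: stable ideals}), and then verify that the displayed relations are (i) genuine syzygies, (ii) linearly independent, and (iii) exactly the right number of them. Since every Borel-fixed ideal is stable, Lemma \ref{lmm: stable ideals} applies and gives, for each minimal generator $\x^A$, the ``allowable'' multipliers $x_j$ with $j \leq \max(A)$ that keep us inside the stable-decomposition block of $\x^A$; the number of variables $x_i$ with $i < \max(A)$ attached to $\x^A$ is precisely $\max(A)$, so the total count of displayed relations is $\sum_{\x^A \in \G(I)} \max(A)$. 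This is exactly the Eliahou--Kervaire Betti number $\beta_1(I)$, so a dimension count will finish the argument once independence is established.

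First I would check that each displayed expression is in fact a syzygy: $x_i \x^A$ and $x_k(\frac{x_i}{x_k}\x^A)$ are the same monomial of $S$, and both $\x^A$ and $\frac{x_i}{x_k}\x^A$ lie in $I$ (the latter because $I$ is Borel-fixed and $i < k$), so the relation expresses a single monomial of $I_{d+1}$ as a combination of two generator-multiples; subtracting gives an honest element of the first syzygy module. Next I would argue linear independence. Here I would use the stable-decomposition of Lemma \ref{lmm: stable ideals} to put each monomial of $I_{d+1}$ in a normal form: every monomial $\x^B \in I_{d+1}$ is uniquely $\x^{A}\cdot x_j$ with $\x^A \in \G(I)$ and $j \geq \max(A)$. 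Given a vanishing linear combination of the displayed syzygies, I would pick, among all generators $\x^A$ appearing with nonzero coefficient, one with $\max(A)$ minimal (or use a suitable term order on the generators), and examine the coefficient of the basis vector $e_{\x^A}$ in the combination: the relations contributing a multiple of $e_{\x^A}$ are those of the form $x_i\x^A - x_k(\ldots)$ with $k = \max(A)$, and each contributes the \emph{distinct} monomial multiplier $x_i$ with $i < \max(A)$; terms coming from \emph{other} generators $\x^{A'}$ can only contribute to $e_{\x^A}$ through the ``$x_k(\frac{x_i}{x_k}\x^{A'})$'' piece, and I would check (again via the normal form) that these land on multipliers $x_j$ with $j \geq \max(A)$, hence cannot cancel the $i < \max(A)$ multipliers. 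This forces all coefficients of the $\x^A$-relations to vanish, and induction on $\max(A)$ finishes independence.

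Finally, since the resolution is linear, the first syzygy module is generated in degree $d+1$, and its dimension in that degree equals $\beta_1(I)$. We have produced $\beta_1(I)$ linearly independent syzygies all of degree $d+1$, so they form a basis. The main obstacle I anticipate is the independence step — specifically, making precise the bookkeeping that a syzygy attached to generator $\x^A$ (via its ``lowering'' term $\frac{x_i}{x_k}\x^A$ with $i<k=\max(A)$) cannot interfere with the pivot monomial of a \emph{different} generator. This is exactly where the uniqueness in the partition of Lemma \ref{lmm: stable ideals} does the work: one must track which block of the partition the monomial $x_i\x^A$ belongs to when rewritten, and confirm it is the block of $\frac{x_i}{x_k}\x^A$, not of $\x^A$. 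Once that compatibility is spelled out, the dimension count makes the proof immediate.
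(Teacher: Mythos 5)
Your proposal is correct and follows essentially the same route the paper takes: the paper gives no detailed argument, simply deriving the lemma from the linearity of the minimal free resolution of a Borel-fixed ideal generated in a single degree together with the decomposition of Lemma \ref{lmm: stable ideals}, and your write-up is a careful fleshing-out of exactly that derivation (count $\sum_{\x^A}\max(A)$ syzygies, check they are independent because their leading terms $x_i e_{\x^A}$ with $i<\max(A)$ are disjoint from the normal-form multipliers $x_j e_{\x^{A'}}$ with $j\geq\max(A')$ appearing in the trailing terms, and match the dimension of the degree-$(m+1)$ piece of the syzygy module). No gaps; the independence bookkeeping you flag as the main obstacle works out exactly as you describe.
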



\section{Extremal ideals} \label{section 3}

As before let $S = K[x_0, \ldots, x_n]$ be a polynomial ring.  We have the following \cite{gotzmann}:

\begin{theorem}[Gotzmann's Persistence]
Let $L \subseteq S$ be lexicographic, generated in degrees $ \leq m$.  Let $I \subseteq S$ be any ideal.  Then 
\[
    \left\{ \begin{array}{c} \dim I_m = \dim L_m \\ \dim I_{m+1} = \dim L_{m+1} \end{array} \right\}
    \iff \dim I_z = \dim L_z \ \forall z \geq m.
\]
\end{theorem}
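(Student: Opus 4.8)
The implication $\Leftarrow$ is immediate (specialize to $z = m$ and $z = m+1$), so the plan is to prove $\Rightarrow$. The strategy is to couple Macaulay's classical estimate on the growth of ideals with a \emph{persistence} statement about the equality case of that estimate.

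For an integer $a$ and a degree $d$, write $g_d(a) := \dim_K(S_1 \cdot V)$, where $V \subseteq S_d$ is the lexicographic segment of dimension $a$. Macaulay's theorem asserts $\dim_K(S_1 \cdot W) \ge g_d(\dim_K W)$ for \emph{every} subspace $W \subseteq S_d$; together with the trivial inclusion $S_1 \cdot J_d \subseteq J_{d+1}$ this gives $\dim_K J_{d+1} \ge \dim_K(S_1 \cdot J_d) \ge g_d(\dim_K J_d)$ for every homogeneous ideal $J$ and every $d$. Now $L$ is lexicographic, so $L_z$ is a lex segment for all $z$, and being generated in degrees $\le m$ it satisfies $L_{z+1} = S_1 \cdot L_z$ for all $z \ge m$; hence $\dim L_{z+1} = g_z(\dim L_z)$ for $z \ge m$. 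Feeding the two hypotheses into this yields $\dim I_{m+1} = \dim L_{m+1} = g_m(\dim L_m) = g_m(\dim I_m)$, and comparing with $\dim I_{m+1} \ge \dim(S_1 I_m) \ge g_m(\dim I_m)$ forces both $I_{m+1} = S_1 \cdot I_m$ and $\dim(S_1 I_m) = g_m(\dim I_m)$; in other words $I_m$ ``grows minimally''.

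The heart of the matter is then a persistence lemma: if $\dim I_{d+1} = g_d(\dim I_d)$ for some $d$, then also $\dim I_{d+2} = g_{d+1}(\dim I_{d+1})$ — equivalently, a subspace that grows minimally has a product with $S_1$ that again grows minimally. Granting this and starting from $d = m$, induction gives $\dim I_{z+1} = g_z(\dim I_z)$ for every $z \ge m$. The two sequences $(\dim I_z)_{z \ge m}$ and $(\dim L_z)_{z \ge m}$ then obey the same recurrence $a_{z+1} = g_z(a_z)$ with the same initial term $a_m = \dim L_m = \dim I_m$, so they coincide, which is exactly the conclusion.

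Everything thus reduces to the persistence lemma, and this is the step I expect to be the genuine obstacle. Since all the quantities are vector-space dimensions, and since by Galligo's theorem \cite{galligo} $\init(gI)$ is Borel-fixed — indeed stable — with $\dim \init(gI)_z = \dim I_z$ for all $z$ and generic $g \in \GL$, I would first replace $I$ by a stable ideal, which affects neither hypothesis nor conclusion. For a stable ideal, the equality $\dim I_{d+1} = g_d(\dim I_d)$ forces $I_{d+1} = S_1 \cdot I_d$ (no new minimal generator in degree $d+1$) together with a rigid ``lex-like'' shape for the filter $I_d$ inside the Borel poset $\poset(n,d)$, and the task is to show this shape is inherited by $I_{d+1}$ inside $\poset(n,d+1)$. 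I would attempt this by induction on the number of variables via the restriction $S \to S/x_n S$ — which carries a stable ideal to a stable ideal in one fewer variable and respects Macaulay's estimate — in tandem with Green's hyperplane-restriction theorem, whose equality case tracks the same rigidity; alternatively, the decomposition of $\M(I)$ in Lemma \ref{lmm: stable ideals} affords enough combinatorial control of $I_d$ and $S_1 I_d$ to compare $\dim S_1^2 I_d$ with the lex value directly. Either route reduces persistence to a finite combinatorial comparison of filters in the Borel poset with lex segments.
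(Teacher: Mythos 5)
The paper never proves this statement: Gotzmann's Persistence is quoted as a classical theorem with a citation to Gotzmann, so there is no in-paper argument to measure you against, and your proposal has to stand on its own. Its outer shell is correct and standard. The backward implication is indeed trivial. For the forward one, Macaulay's bound gives $\dim I_{d+1} \ge \dim(S_1 I_d) \ge g_d(\dim I_d)$ for every homogeneous ideal; a lex ideal generated in degrees $\le m$ satisfies $L_{z+1}=S_1L_z$ and hence $\dim L_{z+1}=g_z(\dim L_z)$ for $z\ge m$; and the two hypotheses squeeze these inequalities into equalities at the first step. Granting that equality in Macaulay's bound at one step forces equality at the next, induction and the fact that $(\dim I_z)$ and $(\dim L_z)$ then satisfy the same recurrence with the same initial value finish the proof. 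All of this is fine.

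The gap is that the ``persistence lemma'' you defer to is not an auxiliary fact you have isolated --- it \emph{is} Gotzmann's Persistence, in its sharpest form, and you do not prove it. Everything before it is bookkeeping; everything after it is immediate; the lemma is the theorem. The two routes you sketch stop exactly where the difficulty begins. Passing to the generic initial ideal is legitimate and harmless, but the claim that Macaulay-equality forces ``a rigid lex-like shape'' on the filter $I_d$ is itself a nontrivial characterization (the Borel-fixed spaces achieving equality are the ``Gotzmann'' ones, and they need not be lex segments), and the assertion that this shape ``is inherited by $I_{d+1}$'' is precisely the statement to be proved, not a stepping stone toward it; nor is it ``a finite combinatorial comparison,'' since it must be established uniformly in $n$ and $d$. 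The route through Green's hyperplane restriction theorem is indeed how Green proves the result, but using it requires proving Green's theorem and then running a delicate double induction on $n$ and $d$ that tracks the equality case of restriction to a generic hyperplane; none of that analysis appears. As written, the proposal is a correct reduction of the theorem to itself, with the genuinely hard step acknowledged but left untouched.
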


In other words,
\emph{the lexicographic ideal sets the pace for the
Hilbert polynomial}.  It may appear that the lex ideal makes its presence only as a place holder, but the proof of the theorem heavily uses known growth properties of the lex ideal.  Leaving it out of the statement of the theorem would be somewhat misleading.  

In this section we prove a local version of Gotzmann's
persistence applicable to a wider class of monomial ideals.
Let $>$ be a term ordering.
If $I$ is a Borel-fixed ideal and $m$ is the largest
degree of any minimal generator of $I,$ then
we will say $I$ is \emph{extremal with respect to $>$}
if the monomials in $I_m$ are the largest monomials
with respect $>.$  

Let us consider some examples.  Any lexicographic ideal
is extremal with respect to $>_{\mathrm{Lex}}.$  The
ideal $(x_0^2, x_0x_1, x_1^2) \subseteq K[x_0, \ldots, x_n]$
is extremal with respect to $>_{\mathrm{RLex}},$ the reverse
lexicographic ordering.  A less obvious example is
the Borel-fixed ideal $I = (x^2, xy, xz, y^3)
\subseteq S = K[x,y,z,w].$  Take $>$ to be any
term ordering that refines the ordering defined by the
weight vector $w = (5,2,1,0).$  The largest
degree of a generator of $I$ is $3.$  To see $I$ is extremal with respect to $>$ we could list all the monomials of $S_3$ along with their weights as in
\[  \newcommand{\mymeas}{10pt}
    \begin{array}{cccccccccc}
    \hline \vspace{-\mymeas}  \\
    x^3 & x^2y & x^2z & x^2w & xy^2 & xyz & xz^2 & xyw & y^3 & xzw \\
    \hline \vspace{-\mymeas} \\
    15  & 12   & 11   & 10   & 9    & 8   & 7    & 7   & 6   &  6 \\
    \hline \vspace{-\mymeas} \\
    \hline \vspace{-\mymeas} \\
    y^2z & xw^2 & yz^2 & y^2w & z^3 & yzw & z^2w & yw^2 & zw^2 & w^3 \\
    \hline \vspace{-\mymeas} \\
    5    & 5    & 4    & 4    & 3   & 3   & 2    & 2    & 1    & 0 \\
    \end{array}\ .
\]
We could then note the monomials in $I_3$ are precisely the first ten monomials listed.  A nicer method would be to note that $I$ has Borel-minimal generators $xz$ and $y^3$.  Thus in degree 3 $xzw$ and $y^3$ will have the lowest weight of any other in $I_3$, namely weight 6.  On the other hand one sees that $y^2z$ and $xw^2$ are the Borel-maximal standard monomials (and thus have the largest weights of any monomials in $S_3 \setminus I_3$) and have common weight 5.  Hence $I$ is extremal.

Not every Borel-fixed ideal is extremal with respect to some
term order.  For example, consider
$I = (x^2,xy^3,y^4) \subseteq K[x,y,z].$
Let $w = (a,b,c)$ be an arbitrary weight vector.  Since
$y^4, x^2z^2 \in I_4$ and $xy^2z \notin I_4,$ we see that
if $I$ is extremal with respect to a term order refining
$w,$ then both $4b > a+2b+c$ and $2a+2c > a+2b+c$ must hold.  But
these two inequalities are clearly incompatible.

What we prove here is that given an extremal ideal there
is a set of ideals forming an affine open subset of an
appropriate Grassmannian for which Gotzmann's persistence
applies with the extremal ideal in place of a
lexicographic ideal.  To that end, given a vector
space $U$ and a subspace $V \subseteq U$ of
dimension $r,$ we will write $[V]$ for the
corresponding point in the Grassmannian $\Grass(r, U).$  If
$P \in \Grass(r,U)$ is any point we will
write $\mathscr{W}_P$ for the standard affine
open chart whose origin is $P$ (if $P = [V]$ and a
basis for $U$ is chosen that extends a basis of $V,$ then
this open set is defined by requiring that the
Pl\"ucker coordinate of the highest wedge of the
basis vectors of $V$ does not vanish).  Recall that $S=K[x_0, \ldots, x_n]$ is a homogeneous polynomial ring.

\begin{theorem}\label{thm: local Gotzmann}
Let $J \subseteq S$ be an ideal extremal with respect to a
term ordering $>,$ with $m$ the largest degree of a
minimal generator, and $r = \dim J_m.$  Let
$V \subseteq S_m$ be a subspace of dimension $r,$ and
set $I = (V),$ the ideal generated by the elements of $V.$
If $[V] \in \mathscr{W}_{[J_m]}\subseteq \Grass(r,S_m)$ then
\[
   \dim I_{m+1} = \dim J_{m+1} \iff
   \dim I_{z} = \dim J_{z} \ \forall z \geq m.
\]
\end{theorem}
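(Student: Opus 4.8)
\emph{Overview.} The reverse implication is immediate (take $z=m+1$), and $\dim I_m=\dim V=r=\dim J_m$ holds automatically since $I$ is generated in degree $m$; so only the forward implication needs proof, and the plan is to recast the hypothesis as a Gr\"obner-basis condition on $I$ and then invoke the fact that a Borel-fixed ideal generated in its degree of regularity has all of its first syzygies in one degree.

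\emph{Reformulating the chart condition and reducing to $\bar J=(J_m)$.} First I would use extremality to observe that the $r$ monomials of $J_m$ form an initial segment of $S_m$ under $>$. Taking the $>$-ordered monomial basis of $S_m$ (with the monomials of $J_m$ first) as the basis extending a basis of $J_m$, the chart $\mathscr{W}_{[J_m]}$ is exactly the set of $[V]$ for which $V$ is the graph of a linear map $J_m\to\langle S_m\setminus J_m\rangle$; such a $V$ has a unique reduced basis consisting of one element $\x^A+(\text{a combination of monomials }<\x^A\text{ lying outside }J_m)$ for each monomial $\x^A\in J_m$, and ---precisely because $J_m$ is an initial segment--- this element has leading monomial $\x^A$. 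Hence $[V]\in\mathscr{W}_{[J_m]}$ is equivalent to $\init_>(I)_m=\init_>(V)=J_m$; this is the only place extremality is used. Next, $J$ is Borel-fixed with top generating degree $m$, so $\reg J=m$ and therefore $J_z=S_{z-m}J_m$ for all $z\ge m$; replacing $J$ by its truncation $\bar J:=(J_m)$ thus changes no dimension $\dim J_z$ with $z\ge m$, and it suffices to prove the theorem for $\bar J$. The ideal $\bar J$ is Borel-fixed, is generated entirely in degree $m$, and ---by Lemma \ref{lmm: first syzygies} together with the linearity of its resolution--- all of its first syzygies on the generating set $\bar J_m$ lie in degree $m+1$.

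\emph{The Gr\"obner-basis heart.} Let $\{g_{\x^A}\}_{\x^A\in\bar J_m}$ be the reduced basis of $V$ above, so that $\init_>(g_{\x^A})=\x^A$; these generate $I$, and the ideal of their leading terms is $\bar J$. For every variable $x_j$ one has $\init_>(x_j g_{\x^A})=x_j\x^A$, so $\bar J_{m+1}=S_1\cdot\bar J_m\subseteq\init_>(I)_{m+1}$; since $\dim\init_>(I)_{m+1}=\dim I_{m+1}$, the hypothesis $\dim I_{m+1}=\dim\bar J_{m+1}$ is equivalent to the equality of monomial spaces $\init_>(I)_{m+1}=\bar J_{m+1}$, hence to the statement that the $S$-polynomials associated with a generating set of $\mathrm{Syz}(\bar J_m)$ ---all of which live in degree $m+1$ by the previous paragraph--- reduce to zero modulo $\{g_{\x^A}\}$. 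By the refined form of Buchberger's criterion, in which only the $S$-pairs coming from a generating set of the syzygies of the leading-term ideal need be tested, $\{g_{\x^A}\}$ is then a Gr\"obner basis of $I$; hence $\init_>(I)=\bar J$, and $\dim I_z=\dim\init_>(I)_z=\dim\bar J_z=\dim J_z$ for all $z\ge m$.

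\emph{Expected obstacle.} The step requiring genuine care is the propagation from degree $m+1$ to all higher degrees inside Buchberger's criterion: one argues by induction on degree that if $\init_>(I)$ already agrees with $\bar J$ through degree $z_0$, then any top-order cancellation among the leading terms in a degree-$(z_0+1)$ representation $\sum h_i g_{\x^{A_i}}$ is a polynomial combination of the degree-$(m+1)$ syzygies of $\bar J_m$, hence can be rewritten to strictly lower order, forcing $\init_>(I)_{z_0+1}=S_1\cdot\bar J_{z_0}=\bar J_{z_0+1}$. It is exactly here that the Borel-fixedness of $\bar J$ ---through Lemma \ref{lmm: first syzygies}, i.e.\ the concentration of its first syzygies in a single degree--- is indispensable; without it the higher $S$-polynomials can genuinely fail to reduce and the Hilbert functions diverge, as the non-extremal examples in the text illustrate. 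A minor point is to confirm that ``$S$-polynomial reduces to zero in degree $m+1$'' is truly equivalent to the dimension hypothesis, which holds because $\bar J_{m+1}\subseteq\init_>(I)_{m+1}$ always and both are monomial spaces of equal dimension once that hypothesis is assumed.
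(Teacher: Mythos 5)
Your proposal is correct, and it reaches the conclusion by a genuinely different (though closely parallel) route. Both arguments rest on the same two inputs: extremality plus the chart condition force the reduced basis of $V$ to have leading terms exactly the monomials of $J_m$, and Lemma \ref{lmm: first syzygies} says the syzygies of those monomial generators are all linear, hence concentrated in degree $m+1$. Where you diverge is the engine that converts ``the degree-$(m+1)$ obstructions vanish'' into ``$\init_>(I)=J_{\geq m}$'': you invoke the refined Buchberger criterion (only the $S$-pairs coming from a generating set of $\mathrm{Syz}(\init_> g_1,\ldots,\init_> g_s)$ need be tested), after observing that the dimension hypothesis is equivalent to those $S$-polynomials reducing to zero since $\bar J_{m+1}\subseteq\init_>(I)_{m+1}$ automatically. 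The paper instead builds the explicit Gr\"obner degeneration $f_A(t)=\x^A+\sum_{B\in\R}c_{AB}t^{w\cdot(A-B)}\x^B$, shows $\{x_jf_A(t)\mid j\geq\max(A)\}$ is a basis of $I(t)_{m+1}$ under the hypothesis, and checks by comparing coefficients that each linear syzygy $x_i\x^B=x_k\x^C$ lifts, concluding flatness of the family at $t=0$. These are two standard packagings of the same idea (lifting syzygies of the initial ideal versus Buchberger's criterion), and each leans on one piece of citable machinery: you on the refined Buchberger criterion, the paper on the flatness criterion for one-parameter Gr\"obner degenerations. Your version makes the logical equivalence between the dimension hypothesis and the vanishing of the degree-$(m+1)$ obstructions slightly cleaner; the paper's version makes the geometric content explicit --- $I$ degenerates flatly to the extremal ideal --- which is the picture driving the Hilbert-scheme application. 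Your reduction to $\bar J=(J_m)$ and your identification of where the induction on degree hides inside Buchberger's criterion are both accurate; no gap.
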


Note that by construction we automatically have $\dim I_m = \dim J_m$.  Thus in spirit the theorem is closely modeled on Gotzmann's, where we have replaced the lex ideal with any extremal ideal.  However the requirement
$[V] \in \mathscr{W}_{[J_m]}$ is a new restriction meant to compensate for this flexibility.  It is this restriction which makes the theorem a ``local version''.

\begin{proof}
Recall that $J_{\geq m}$ must be Borel-fixed.
Let $\F$ be the index set of exponent vectors of monomials
in $J$ of degree $m,$
and $\R$ the the complementary index set of exponent vectors of the standard monomials of degree $m$.
The assumption $[V] \in \mathscr{W}_{[J_m]}$
means that, after an application of Gaussian elimination, we can write $I = (f_A \mid A \in \F)$ where
\[
   f_A = \x^A + \sum_{B \in \R} c_{AB} \x^B .
\]
The term order $>$ is such that for any
$A \in \F$ and $B \in \R$ we have
$\x^A > \x^B.$
Let $w$ be a weight vector inducing this term order
for monomials up to degree at least $m.$
For $t \in K$ set
\[
   f_A(t) = \x^A + \sum_{B \in \R} c_{AB} t^{w\cdot(A-B)}\x^B
\]
and
\[
   I(t) = (f_A(t) \mid A \in \F) .
\]
Thus $I(1) = I$ and $I(0) = J_{\geq m}.$
Note that for
$t \neq 0, I(t)$ is the image of $I$ under the
action $x_i \mapsto t^{-w_i}x_i.$  Hence the family is flat away from $t=0$.
Also for every
$t \in K, I(t)_m \in \mathscr{W}_{[J_{m}]}$.
We will show that $\dim I_{m+1} = \dim J_{m+1}$
implies that the family $I(t)$ is flat
at $t = 0.$  This says that
$\init_> (I) = J_{\geq m},$ from which the theorem
follows since any ideal shares the same
Hilbert function with its initial ideal.

We remark that $\init_> (f_A(t)) = \x^A$ for any $t,$
hence $\mathrm{in}_> (I(t)) \supseteq J_{\geq m}$
for every $t.$
Furthermore $\init_>(I(t))_m = J_m$ certainly holds.

Now suppose that $\dim I_{m+1} = \dim J_{m+1}.$
By lemma \ref{lmm: stable ideals} the set
\[
   \{x_j\x^A \mid A \in \F, j \geq \max (A) \}
\]
is a $K$-basis of $J_{m+1}.$  It follows that
\[
   \{x_j f_A(t) \mid A \in \F, j \geq \max (A) \}
\]
is a $K$-basis of $I_{m+1}(t),$ for any $t \in K$
(the elements are linearly independent, and there are
$\dim J_{m+1}$ of them).

So if we choose any $i \in \{0, \ldots, n\}$
and $B \in \F$ such that
$i < \max(B)$ then $x_i f_B(t)$ can be written in terms
of this basis:
\[
   x_i f_B(t) = \sum_{
   \substack{j, A \\ j \geq \max(A)}
   }  \lambda^A_j x_j f_A(t)
\]
where each $\lambda^A_j$ is a polynomial in $t.$
Let $(k, C), k \geq \max(C),$ be the unique such
pair where
$x_i \x^B = x_k \x^C.$  By comparing like terms in the
above equation we find that for $j, A, j \geq \max(A)$
with $(j, A) \neq (k, C),$ the polynomial
$\lambda^A_j$ has only positive powers of $t.$  On
the other hand $\lambda^C_k(0) = 1.$  Hence
this equation lifts the syzygy
\[
   x_i \x^B = x_k \x^C.
\]
By lemma \ref{lmm: first syzygies}
all minimal syzygies of $J_{\geq m}$ are of
this form.  Hence the family $I(t)$ is flat.
This proves the theorem.
\end{proof}

As an example consider the ideal
$J = (x^2,xy,y^2) \subseteq S=K[x,y,z].$  This is
the smallest Borel-fixed ideal which is not lexicographic.
It defines a tripled point in the plane, and it is
extremal with respect to the reverse lex ordering.
By theorem \ref{thm: local Gotzmann} we can describe
a neighborhood of the Hilbert scheme of $3$ points in the plane
centered at this Borel-fixed point by determining under
what conditions the ideal generated by
\begin{gather*}
   f = x^2 + Axz+Byz+Cz^2, \\
   g = xy + Dxz+Eyz+Fz^2, \\
   h = y^2 + Gxz+Hyz+Iz^2
\end{gather*}
has ${2+3\choose 2}-3 = 7$ generators in degree $3.$  (Note that because regularity is an upper semicontinuous function the subset of the Hilbert scheme where $\mathrm{reg} \leq 2$ is open, and as in the classical construction this can be embedded in $\Grass(3, S_2)$;  we then take the open subset ``centered'' about the given extremal ideal.)  To
determine this note in degree $3$ we have the
$12$ polynomials
\[
   xf, yf, zf, xg, yg, zg, xh, yh, zh .
\]
We need for them to span a $7-$dimensional space.
Extract the coefficients of these polynomials and
put them in matrix form as follows:
\[
\left(
   \begin{array}{c|cccccccccc}
    & x^3 &x^2y & x^2z & xy^2 & xyz
    & xz^2 & y^3 & y^2z & yz^2 & z^3 \\ \hline
   xf & 1& 0& A& 0& B& C& 0& 0& 0& 0 \\
   yf & 0& 1& 0& 0& A& 0& 0& B& C& 0 \\
   zf & 0& 0& 1& 0& 0& A& 0& 0& B& C \\
   xg & 0& 1& D& 0& E& F& 0& 0& 0& 0 \\
   yg & 0& 0& 0& 1& D& 0& 0& E& F& 0 \\
   zg & 0& 0& 0& 0& 1& D& 0& 0& E& F \\
   xh & 0& 0& G& 1& H& I& 0& 0& 0& 0 \\
   yh & 0& 0& 0& 0& G& 0& 1& H& I& 0 \\
   zh & 0& 0& 0& 0& 0& G& 0& 1& H& I
   \end{array} \right)
\]
The rows of this matrix span a linear space
of dimension at least $7.$  Thus we
need for the $8\times8$ minors to vanish.
There are ${12\choose 8} {10\choose 8} = 22275$
such minors.  However only $333$ of them are
non-zero.  Using a computer algebra system
such as Macaulay~2
\cite{M2} we can take take the ideal of these
minors.  After triming it down one finds there are
only $10$ generators to this ideal.  They
can be chosen as
\[ {\vspace{11pt}\renewcommand{\arraystretch}{.7}
\begin{array}{l}
DF-CG-FH+EI                  , \\
DE-BG-F                      , \\
D^2-AG+EG-DH+I               , \\
CD-AF+EF-BI                  , \\
BD-AE+E^2-BH+C               , \\
CEG-BFG+EFH-E^2I-F^2         , \\
AF^2-EF^2-C^2G-CFH+CEI+BFI   , \\
AEF-E^2F-BCG+BEI-CF          , \\
AE^2-E^3-B^2G+BEH-CE-BF      , \\
ACE-CE^2-ABF+BEF+BCH-B^2I-C^2.
\end{array} } \]
Using the computer algebra system one can verify, for instance,
that the variety is smooth of dimension $6,$
which is consistent with the fact that the Hilbert scheme of $N$ points
in the plane is a smooth resolution of the symmetric
variety $\mathrm{Sym^N (\P^2)}$ \cite{fogarty}.

We finish by remarking that to embed this entire Hilbert scheme one would note that the lex ideal is $L=(x, y^3)$ which has regularity 3.  Since $\dim L_3 = 7$ we would need to embed into $\Grass(7, S_3)$ which as dimension $7 \times 3 = 21.$  We managed to embed an open subset into $\Grass(3, S_2)$ which has dimension $3 \times 3 = 9$, a considerable computational improvement.

\bibliography{biblio}
\bibliographystyle{alpha}

\end{document}